\documentclass[reqno]{amsart}
\usepackage{amssymb, amsmath, amsthm, amscd, mathrsfs}

\usepackage{paralist}
\usepackage{cite}
\usepackage{bigints}
\usepackage{dsfont}
\usepackage{empheq}
\usepackage{amssymb}
\usepackage{cases}
\usepackage{enumitem}
\allowdisplaybreaks

\usepackage{caption}
\usepackage{booktabs}

\usepackage{float}
\usepackage[ruled,vlined,linesnumbered,noresetcount]{algorithm2e}

\usepackage{hyperref}

\newtheorem{theorem}{Theorem}[section]

\newtheorem{cor}[theorem]{Corollary}

\newtheorem{defn}[theorem]{Definition}
\newtheorem{rmk}{Remark}

\numberwithin{equation}{section}
\def \dv {\mathrm{div}}
\def \d {\mathrm{d}}

\title[Logarithmic convexity of non-symmetric fractional equations]{Logarithmic convexity of non-symmetric time-fractional diffusion equations}

\author{S. E. Chorfi}
\author{L. Maniar}

\address{S. E. Chorfi, L. Maniar, Cadi Ayyad University, Faculty of Sciences Semlalia, LMDP, UMMISCO (IRD-UPMC), B.P. 2390, Marrakesh, Morocco}
\email{s.chorfi@uca.ac.ma, maniar@uca.ac.ma}

\author{M. Yamamoto}
\address{M. Yamamoto, Graduate School of Mathematical Sciences, The University of Tokyo, Komaba, Meguro, Tokyo 153-8914, Japan}
\email{myama@ms.u-tokyo.ac.jp}

\makeatletter 
\@namedef{subjclassname@2020}{%
  \textup{2020} Mathematics Subject Classification}
\makeatother

\subjclass[2020]{35R11, 35R30, 35R25}
 \keywords{Fractional diffusion equation, non-symmetric equation, backward problem, logarithmic convexity, H\"older stability}

\begin{document}

\begin{abstract}
    We consider a class of diffusion equations with the Caputo time-fractional derivative $\partial_t^\alpha u=L u$ subject to the homogeneous Dirichlet boundary conditions. Here, we consider a fractional order $0<\alpha < 1$ and a second-order operator $L$ which is elliptic and non-symmetric. In this paper, we show that the logarithmic convexity extends to this non-symmetric case provided that the drift coefficient is given by a gradient vector field. Next, we perform some numerical experiments to validate the theoretical results in both symmetric and non-symmetric cases. Finally, some conclusions and open problems will be mentioned.
\end{abstract}
\dedicatory{\large Dedicated to the memory of Professor Hammadi Bouslous}
\maketitle

\section{Introduction and main results}
Backward and inverse problems for time-fractional evolution equations have had a growing focus in the recent literature. These equations can model various phenomena, such as slow diffusion and anomalous diffusion, particularly in heterogeneous or porous media; see, for instance, \cite{AG92}. We also refer to \cite[Chapter 10]{Pod99} for other physical applications. In contrast to symmetric equations, non-symmetric evolution equations present some challenges (usually from a spectral viewpoint) and perturbation arguments do not work easily as in the integer case. Hence, few results are available in the literature. Here we aim at pushing forward the investigation of fractional non-symmetric equations within the framework of backward and inverse problems.

Let $n\in \mathbb N$ and $\Omega\subset \mathbb{R}^n$ be a bounded domain with boundary $\partial \Omega$ of class $C^2$. We consider the following backward problem with Dirichlet boundary conditions:
\begin{empheq}[left = \empheqlbrace]{alignat=2}
\begin{aligned}\label{eqe1}
&\partial_{t}^\alpha u(t,x) = L u(t,x), && \qquad\text { in }  (0, T) \times \Omega, \\
& u\rvert_{\partial \Omega} =0, &&\qquad\text { on } (0, T) \times \partial \Omega, \\
& u(T,x)=u_T(x) && \qquad \text{ in } \Omega,
\end{aligned} 
\end{empheq}
posed in the state space $L^2(\Omega)$. Note that other boundary conditions can be considered (e.g., Neumann and Robin). The fractional derivative $\partial_{t}^\alpha u$ stands for the Caputo derivative:
\begin{equation}\label{cap}
    \partial_t^{\alpha} u(t) = \begin{cases}\displaystyle \frac{1}{\Gamma(1-\alpha)} \int^t_0 (t-s)^{-\alpha} \frac{\d }{\d s}u(s) \, \d s, & 0<\alpha<1,\\
    \dfrac{\d }{\d t}u(t), & \alpha=1,
    \end{cases}
\end{equation}
whenever the right-hand side is defined. Here $\Gamma$ denotes the Euler gamma function.

Moreover, $L$ is a non-symmetric elliptic operator of second order given by
\begin{equation}\label{opdef}
L u\, (x) := \sum_{i,j=1}^n \partial_i(a_{ij}\partial_j u)(x) + \sum_{j=1}^n  b_j(x) \partial_j u(x) + p(x) u(x),
\end{equation}
such that
$$
a_{ij} = a_{ji} \in C^1(\overline{\Omega}), \quad b_j \in W^{1,\infty}(\Omega),\quad 1\le i,j \le n, \qquad  p \in L^\infty(\Omega),
$$
and there exists a constant $\kappa>0$ such that 
$$
\sum_{i,j=1}^n a_{ij}(x)\xi_i\xi_j \ge \kappa \sum_{j=1}^n \xi_j^2,
\quad x \in \overline{\Omega}, \; (\xi_1, ..., \xi_n) \in \mathbb{R}^n.
$$
The unit outward normal vector field to $\partial \Omega$ is denoted by $\nu$ and the conormal derivative with respect to $a=(a_{ij})$ by
$$\partial_{\nu}^{a} u=\sum_{i, j=1}^{n} a_{i j} \nu_{j} (\partial_{i} u)_{|\partial \Omega}.$$
The domain of the operator $L$ is given by
\begin{equation} \label{domop}
    D(L)= H^2(\Omega) \cap H^1_0(\Omega).
\end{equation}
Set $\mathcal{A}=(a_{ij})_{ij}$ for the diffusion matrix and $\mathcal{B}=(b_1, b_2, \ldots, b_n)$ for the drift term. Then the operator $L$ defined by \eqref{opdef} can be written as
$$Lu\, (x) =\mathrm{div}(\mathcal{A}(x)\nabla u(x)) + \mathcal{B}(x)\cdot \nabla u(x) + p(x) u(x).$$
Next, we introduce the main assumption on the drift term:
\begin{equation*}\label{H}
\textbf{(H)} \quad \text{There exists a function } b\in W^{2,\infty}(\Omega) \text{ such that } \mathcal{B}=\mathcal{A}\nabla b.    
\end{equation*}
Assumption \textbf{(H)} is satisfied for the isotropic case $\mathcal{A}=I_n$ (the identity matrix) and gradient drift $\mathcal{B}=\nabla b$. We emphasize that this special case of Assumption \textbf{(H)} has been widely considered in the literature. It appears for instance in differential topology and theoretical physics (Helffer-Sjöstrand theory \cite{HS85}) as it relates to Witten Laplacian. Furthermore, such an assumption has been useful in resolving some uniform controllability problems for parabolic equations with vanishing viscosity; see \cite{LL21} and its bibliography.

Let us state the main result of the logarithmic convexity of the non-symmetric equation \eqref{eqe1}.
\begin{theorem} \label{thm1}
Assume that Assumption \textbf{(H)} is fulfilled. Let $u_T\in L^2(\Omega)$ and let $u\in C([0,T];L^2(\Omega))$ be a solution to \eqref{eqe1}. Then there exists a constant $\kappa=\kappa(\mathcal{A}, b, p,\alpha, T) \ge 1$ such that
\begin{equation}\label{lceq1}
\|u(t,\cdot)\|_{L^2(\Omega)} \le \kappa \mathrm{e}^{\|b\|_\infty} \|u(0,\cdot)\|^{1-\frac{t}{T}}_{L^2(\Omega)} \|u(T,\cdot)\|^{\frac{t}{T}}_{L^2(\Omega)}, \qquad 0\le t \le T.
\end{equation}
\end{theorem}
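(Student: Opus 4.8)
The plan is to reduce the non-symmetric problem to a symmetric one by a change of unknown that "absorbs" the gradient drift, and then apply a logarithmic convexity estimate for symmetric time-fractional equations. Concretely, under Assumption \textbf{(H)} write $\mathcal{B} = \mathcal{A}\nabla b$ and set $v(t,x) = \mathrm{e}^{-b(x)/2} u(t,x)$, or more naturally work with the weighted inner product $\langle f,g\rangle_b = \int_\Omega \mathrm{e}^{-b} f\overline g\,\d x$. The point is that the conjugated operator $L_b := \mathrm{e}^{-b/2} L\, \mathrm{e}^{b/2}$ (equivalently, $L$ viewed on the weighted space $L^2(\Omega, \mathrm{e}^{-b}\d x)$) becomes self-adjoint: for the divergence-form part plus the drift, the first-order terms combine with the weight so that $L$ is symmetric with respect to $\langle\cdot,\cdot\rangle_b$, with the same principal part $\mathrm{div}(\mathcal{A}\nabla\cdot)$ and a modified zeroth-order potential $\tilde p = p - \tfrac14 \mathcal{A}\nabla b\cdot\nabla b - \tfrac12\mathrm{div}(\mathcal{A}\nabla b) \in L^\infty(\Omega)$, using $b\in W^{2,\infty}(\Omega)$. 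Since the Caputo derivative acts only in time, $v$ (or $u$ in the weighted space) solves a time-fractional diffusion equation $\partial_t^\alpha v = L_b v$ with homogeneous Dirichlet data and with $L_b$ symmetric and elliptic.

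Next I would invoke the known logarithmic convexity result for the \emph{symmetric} time-fractional case — the spectral-decomposition argument via Mittag-Leffler functions, eigenfunction expansions, and the three-lines/Hadamard-type inequality — to obtain
$$
\|v(t,\cdot)\|_{L^2(\Omega)} \le \kappa_0\, \|v(0,\cdot)\|_{L^2(\Omega)}^{1-t/T}\,\|v(T,\cdot)\|_{L^2(\Omega)}^{t/T}, \qquad 0\le t\le T,
$$
with $\kappa_0 = \kappa_0(\mathcal{A}, \tilde p, \alpha, T)\ge 1$. This is the self-adjoint analogue already available in the literature for $\partial_t^\alpha u = Lu$; it is legitimate to cite it since the conjugated operator falls squarely in that class. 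Then I translate back: $\|v(t,\cdot)\|_{L^2} = \|\mathrm{e}^{-b/2}u(t,\cdot)\|_{L^2}$ is comparable to $\|u(t,\cdot)\|_{L^2}$ up to the factors $\mathrm{e}^{\pm\|b\|_\infty/2}$, so multiplying out the weights on both sides produces $\|u(t,\cdot)\|_{L^2} \le \kappa\,\mathrm{e}^{\|b\|_\infty}\|u(0,\cdot)\|^{1-t/T}\|u(T,\cdot)\|^{t/T}$, where the total exponent of $\mathrm{e}^{\|b\|_\infty}$ collected from the three weight conversions is bounded by $1$ (the convexity in $t$ of $t\mapsto \tfrac12(1 + (1-t/T)+t/T)=1$ keeps it at exactly $\mathrm{e}^{\|b\|_\infty}$), giving the stated form with $\kappa\ge 1$.

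The main obstacle I anticipate is \emph{not} the algebra of the conjugation but the functional-analytic bookkeeping: one must check that the change of variables preserves the solution class $C([0,T];L^2(\Omega))$ and the domain $D(L) = H^2\cap H^1_0$ (true since $\mathrm{e}^{\pm b/2}\in W^{2,\infty}$ is a multiplier on $H^2\cap H^1_0$), and that the symmetric logarithmic convexity theorem genuinely applies to $L_b$ with a bounded, possibly sign-changing potential $\tilde p$ — i.e. that the cited result tolerates a general $L^\infty$ zeroth-order term and is not restricted to nonpositive operators. If the available symmetric statement assumes $p\le 0$ or a spectral gap, a short additional argument (shifting $L_b$ by a large constant $\lambda$, which only rescales solutions by a Mittag-Leffler factor and is absorbed into $\kappa$) closes the gap. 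The rest — verifying that the weight factors assemble into precisely $\mathrm{e}^{\|b\|_\infty}$ and that $\kappa$ depends only on the listed quantities — is routine.
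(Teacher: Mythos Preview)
Your approach is exactly the paper's: conjugate by $\mathrm{e}^{\pm b/2}$ to reduce to a symmetric operator $\mathrm{div}(\mathcal{A}\nabla\,\cdot\,)+q$ with $q=p-\tfrac14\mathcal{A}\nabla b\cdot\nabla b-\tfrac12\,\mathrm{div}(\mathcal{A}\nabla b)$, invoke Theorem~\ref{thm2} for the self-adjoint case, and convert back via $\mathrm{e}^{\pm\|b\|_\infty/2}$ to assemble the factor $\mathrm{e}^{\|b\|_\infty}$. The only slip is the sign of the exponent: with $\mathcal{B}=\mathcal{A}\nabla b$ the substitution that kills the first-order term is $v=\mathrm{e}^{b/2}u$ (equivalently, $L$ is symmetric in $L^2(\Omega,\mathrm{e}^{b}\d x)$, not $\mathrm{e}^{-b}\d x$); your stated $L_b=\mathrm{e}^{-b/2}L\,\mathrm{e}^{b/2}$ still carries a drift $2\mathcal{A}\nabla b\cdot\nabla v$, though your formula for $\tilde p$ is the correct one, so this is a typo rather than a gap.
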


As a direct consequence, we infer the following backward uniqueness property.
\begin{cor}
Under the assumptions of Theorem \ref{thm1}, the backward uniqueness for \eqref{eqe1} holds, i.e., if $u(T, \cdot)= 0$, then $u(t,\cdot)= 0$ for all $t\in [0,T]$.
\end{cor}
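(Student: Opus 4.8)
The plan is to read off the conclusion directly from the logarithmic-convexity estimate \eqref{lceq1} of Theorem \ref{thm1}, exploiting the positivity of the exponent $t/T$ on the factor $\|u(T,\cdot)\|_{L^2(\Omega)}$. The hypothesis $u(T,\cdot)=0$ means $\|u(T,\cdot)\|_{L^2(\Omega)}=0$, so the right-hand side of \eqref{lceq1} factors through this vanishing norm raised to the power $t/T$.

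First I would fix any $t\in(0,T]$. Since $0<\alpha<1$ and $T>0$, the exponent $t/T$ is strictly positive, hence $\|u(T,\cdot)\|_{L^2(\Omega)}^{t/T}=0^{t/T}=0$. Substituting into \eqref{lceq1} gives
\begin{equation*}
\|u(t,\cdot)\|_{L^2(\Omega)}\le \kappa\,\mathrm{e}^{\|b\|_\infty}\,\|u(0,\cdot)\|_{L^2(\Omega)}^{1-\frac{t}{T}}\cdot 0=0,
\end{equation*}
so that $u(t,\cdot)=0$ in $L^2(\Omega)$ for every $t\in(0,T]$.

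The only endpoint requiring care is $t=0$, where the exponent $t/T$ vanishes and the estimate \eqref{lceq1} degenerates to the uninformative bound $\|u(0,\cdot)\|_{L^2(\Omega)}\le \kappa\,\mathrm{e}^{\|b\|_\infty}\|u(0,\cdot)\|_{L^2(\Omega)}$. I expect this to be the sole (and mild) obstacle, and I would resolve it by invoking the regularity $u\in C([0,T];L^2(\Omega))$ assumed in Theorem \ref{thm1}: since $u(t,\cdot)=0$ for all $t\in(0,T]$, continuity at the origin yields
\begin{equation*}
u(0,\cdot)=\lim_{t\to 0^+}u(t,\cdot)=0 \quad\text{in } L^2(\Omega).
\end{equation*}
Combining the two cases gives $u(t,\cdot)=0$ for all $t\in[0,T]$, which is the asserted backward uniqueness.
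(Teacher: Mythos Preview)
Your argument is correct and is precisely the intended one: the paper states the corollary as a direct consequence of Theorem~\ref{thm1} without giving further details, and what you have written is exactly how one reads off backward uniqueness from the estimate~\eqref{lceq1}, including the use of $u\in C([0,T];L^2(\Omega))$ to handle $t=0$.
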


\begin{rmk}
Assumption \textbf{(H)} is sufficient but not necessary for the logarithmic convexity estimate; see Section \ref{sec4}, Example 3.
\end{rmk}

The logarithmic convexity method is a well-known approach that has been extensively applied to prove backward uniqueness and conditional stability for inverse and ill-posed problems. We refer to the survey paper \cite{Car99} and the cited bibliography. This method is beneficial for reconstructing the initial data and studying backward problems \cite{Car13}. Interested readers can find more details in the books \cite{Pa'75, AS'97}. We also refer to \cite{KP'60, Mi75} for abstract results on analytic semigroups. In particular, as for the backward uniqueness, see \cite{Gh'86} and the references therein. It should be pointed out that the logarithmic convexity method provides a very explicit estimate even for non-symmetric cases, although its applicability is limited.

Many works are available for backward problems for time-fractional evolution equations in the symmetric case. For instance, in \cite{WWZ13}, Tikhonov regularization and conditional stability have been proposed based on a Fredholm integral equation for the numerical reconstruction of initial states. The article \cite{Ha19} obtains optimal stability estimates and a regularization scheme based on a non-local boundary value method with parameter choice rules for a linear equation. We refer to \cite{THNZ19} for a nonlinear equation where the existence, uniqueness, and regularization of a local solution have been investigated. It should be emphasized that most results rely on eigenfunctions expansion and the properties of Mittag-Leffler functions.

The non-symmetric case still needs to be explored regarding backward uniqueness and logarithmic convexity (see Section \ref{sec5} for more details). Indeed, we are only aware of \cite{FLY'20}, where the authors prove the well-posedness of a backward problem for time-fractional diffusion equations with a non-symmetric operator. The proofs draw on a perturbation argument and the completeness of generalized eigenfunctions. We also mention the recent paper \cite{JLPY23} for a boundary unique continuation result applied to an inverse source problem. The proof is based on the Laplace transform and the spectral decomposition. For numerical aspects, we refer to the recent paper \cite{YWL22} for a one-dimensional advection-dispersion equation where a quasi-boundary regularization method has been proposed.

The structure of the paper will be as follows: in Section \ref{sec2}, we introduce some preliminary results that will be useful for the sequel. In particular, we extend the definition of logarithmic convexity to include time-fractional evolution equations. Section \ref{sec3} is devoted to the proof of Theorem \ref{thm1}. In Section \ref{sec4}, we provide some numerical experiments to validate our theoretical results in both symmetric and non-symmetric cases. Finally, we conclude this paper with some conclusions and open problems.

\section{Preliminary results \label{sec2}}
We start this section by recalling some preliminaries that will be used later throughout this paper.

Let $0 < \alpha \le 1$ and $T>0$ be fixed. Let $X$ be a Banach space, and let $\|\cdot\|$ be its associated norm. We consider the abstract fractional Cauchy problem
\begin{empheq}[left = \empheqlbrace]{alignat=2}
\begin{aligned}\label{eq1}
&\partial_{t}^\alpha u(t) = A u(t), && \qquad  t\in (0, T), \\
& u(0)=u_0,
\end{aligned}
\end{empheq}
where $A: D(A) \subset X \rightarrow X$ generates a $C_0$-semigroup $\left(S(t)\right)_{t\ge0}$ in $X$. The solution of \eqref{eq1} is given by $u(t)=S_\alpha(t)u_0$, with the solution operator given by
\begin{equation}\label{repeq}
    S_\alpha(t)v=\int_0^{\infty} \Phi_\alpha(s) S\left(s t^\alpha \right)v \, \d s, \quad v\in X,
\end{equation}
and
$$\Phi_\alpha(z):=\sum_{n=0}^{\infty} \frac{(-z)^n}{n !\,  \Gamma(-\alpha n+1-\alpha)}, \quad z \in \mathbb{C},$$
is the Wright function which is a probability density function:
$$\Phi_\alpha(t) \geq 0, \quad t>0, \quad \int_0^{\infty} \Phi_\alpha(t)\, \d t=1 .$$
We denote by $E_{\alpha}(z)$ the Mittag-Leffler function
$$
E_{\alpha}(z) = \sum_{k=0}^{\infty} \frac{z^k}{\Gamma(\alpha k + 1)}, \qquad z\in \mathbb{C},
$$
which is an entire function in $\mathbb{C}$, see, e.g., \cite{Gor20}. Note that the two functions are related by the formula
$$E_{\alpha}(z)=\int_0^\infty \Phi_\alpha(t)\, \mathrm{e}^{zt}\, \d t, \qquad z\in \mathbb{C},\; 0<\alpha<1.$$
We refer to the thesis \cite{Baj'01} for a detailed exposition.

Inspired by \cite[Definition 3.1]{CM'23} for the integer case $\alpha=1$, we introduce the following general definition.
\begin{defn}
Let $0 < \alpha \le 1$. We say that the solution operator $(S_\alpha(t))_{t\ge 0}$ to \eqref{eq1} satisfies a logarithmic convexity estimate for $T >0$ if there exist a constant $\kappa=\kappa(T,\alpha) \ge 1$ and a continuous function $w \colon (0,T) \rightarrow (0,1)$, $w(0)=0$ and $w(T)=1$, such that the following estimate
\begin{equation}\label{elc}
\left\|S_\alpha(t) u_0\right\| \le \kappa \|u_0\|^{1-w(t)} \left\|S_\alpha(T)u_0\right\|^{w(t)}
\end{equation}
holds for all $t\in [0,T]$ and all $u_0\in X$.
\end{defn}
We collect a few facts about logarithmic convexity.
\begin{itemize}
    \item[$\bullet$] \textbf{Integer case $\alpha=1$:} logarithmic convexity holds for symmetric operators with $\kappa=1$ and $w(t)=\dfrac{t}{T}$ (see e.g. \cite[Section 2]{GaT'11}). This results from the fact that $t\mapsto \|u(t)\|$ is a log-convex function (hence the name logarithmic convexity). If the operator is subordinated to its symmetric part, a logarithmic convexity result is established in \cite[Theorem 3.1.3]{Is'17} with $w(t)=\dfrac{1-\mathrm{e}^{-ct}}{1-\mathrm{e}^{-cT}}$, $c>0$ being constant. More generally, for analytic semigroups, a more general function $w(t)$ (given by the so-called harmonic measure) satisfies \eqref{elc}. More details can be found in \cite{ACM'23}. It should be emphasized that some logarithmic convexity estimates have recently been obtained for non-analytic cases in \cite{CM'23, CM2'23} with $w(t)=c_T\dfrac{t}{T}$, $c_T\in (0,1]$ is constant.
    \item[$\bullet$] \textbf{Fractional case $0<\alpha<1$:} only a few results are known. For instance, logarithmic convexity holds for symmetric operators such as \eqref{opdef} without drift term, i.e., $\mathcal{B}=0$. It has recently been proven in \cite{CMY'22} that \eqref{elc} holds in this case with $w(t)=\dfrac{t}{T}$. The general case of non-symmetric operators remains an open problem. This motivates our present work.
\end{itemize}
For future use, we recall the following result from \cite{CMY'22} on the symmetric case. Let $0 < \alpha \le 1$ and $X=H$ be a separable Hilbert space.
\begin{theorem} \label{thm2}
Assume that the operator $A$ is self-adjoint, bounded above and has compact resolvent. Then the associated solution $u$ to \eqref{eq1} satisfies the logarithmic convexity estimate for $T >0$ with $w(t)=\dfrac{t}{T}$, $t\in [0,T]$. That is, there exists a constant $\kappa\ge 1$ such that
\begin{equation}\label{lceq2}
\|u(t)\| \le \kappa \|u_0\|^{1-\frac{t}{T}} \|u(T)\|^{\frac{t}{T}}, \qquad 
0\le t \le T
\end{equation}
for all $u_0 \in H$.
\end{theorem}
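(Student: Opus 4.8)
The plan is to diagonalize $A$, reduce \eqref{lceq2} to a scalar interpolation inequality for the Mittag-Leffler function, and recombine the spectral modes by Hölder's inequality. Since $A$ is self-adjoint with compact resolvent, the spectral theorem provides an orthonormal basis $\{\phi_k\}_{k\ge1}$ of $H$ with $A\phi_k=\lambda_k\phi_k$, the eigenvalues being real, bounded above by some $\lambda_0$, and satisfying $\lambda_k\to-\infty$. Write $u_0=\sum_k c_k\phi_k$ with $c_k=(u_0,\phi_k)$. As the generated semigroup acts by $S(\tau)\phi_k=\mathrm{e}^{\lambda_k\tau}\phi_k$, the representation \eqref{repeq} and the relation $E_\alpha(z)=\int_0^\infty\Phi_\alpha(s)\mathrm{e}^{zs}\,\d s$ (for $0<\alpha<1$; the case $\alpha=1$ is the classical log-convex one with $\kappa=1$) yield $S_\alpha(t)\phi_k=E_\alpha(\lambda_k t^\alpha)\phi_k$. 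Hence $u(t)=\sum_k c_k E_\alpha(\lambda_k t^\alpha)\phi_k$ and, by Parseval's identity,
\[ \|u(t)\|^2=\sum_k |c_k|^2\, E_\alpha(\lambda_k t^\alpha)^2, \]
the term-by-term manipulations being justified by dominated convergence, since $E_\alpha(\lambda_k t^\alpha)$ stays uniformly bounded for $\lambda_k\le\lambda_0$ and $t\in[0,T]$.

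The crux is a scalar comparison estimate: there exists $\kappa=\kappa(\alpha,T,\lambda_0)\ge1$ with
\[ E_\alpha(\lambda t^\alpha)\le \kappa\, E_\alpha(\lambda T^\alpha)^{t/T}, \qquad \lambda\le\lambda_0,\ \ 0\le t\le T, \]
noting that $E_\alpha(\lambda t^\alpha)>0$ throughout, so the powers and later square roots are legitimate. I would prove it in two regimes. For $\lambda\le0$, set $x=-\lambda\ge0$ and invoke the sharp two-sided bound $\frac{m}{1+y}\le E_\alpha(-y)\le\frac{M}{1+y}$ for $y\ge0$, with $m,M>0$ depending only on $\alpha$. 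With $s=t/T\in[0,1]$ and $y=xT^\alpha$, the ratio $E_\alpha(-xt^\alpha)/E_\alpha(-xT^\alpha)^{t/T}$ is dominated by $\frac{M}{m^{s}}\cdot\frac{(1+y)^s}{1+ys^\alpha}$, and the elementary chain $(1+y)^s\le1+sy\le1+ys^\alpha$ — where the first step is the concave Bernoulli inequality and the second uses $s\le s^\alpha$ for $0<\alpha\le1$, $s\in[0,1]$ — shows the last factor is $\le1$; since $m^{s}\ge\min(m,1)$, this gives a bound uniform in $\lambda$. For $0<\lambda\le\lambda_0$, the ratio extends to a continuous, strictly positive function on the compact set $[0,\lambda_0]\times[0,T]$ (equal to $1$ along $t=0$) and is therefore bounded; this is precisely where the boundedness above of $A$ is used. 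Taking the larger of the two constants produces $\kappa$.

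Finally I would recombine the modes. Squaring the comparison estimate gives $E_\alpha(\lambda_k t^\alpha)^2\le\kappa^2 E_\alpha(\lambda_k T^\alpha)^{2t/T}$, whence $\|u(t)\|^2\le\kappa^2\sum_k|c_k|^2 E_\alpha(\lambda_k T^\alpha)^{2t/T}$. Factoring $|c_k|^2 E_\alpha(\lambda_k T^\alpha)^{2t/T}=(|c_k|^2)^{1-t/T}\,(|c_k|^2 E_\alpha(\lambda_k T^\alpha)^2)^{t/T}$ and applying Hölder's inequality with conjugate exponents $(1-t/T)^{-1}$ and $T/t$, together with $\sum_k|c_k|^2=\|u(0)\|^2$ and $\sum_k|c_k|^2 E_\alpha(\lambda_k T^\alpha)^2=\|u(T)\|^2$, yields $\|u(t)\|^2\le\kappa^2\|u(0)\|^{2(1-t/T)}\|u(T)\|^{2t/T}$; taking square roots gives \eqref{lceq2}, the endpoints $t=0,T$ being trivial. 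The main obstacle is the scalar comparison lemma: unlike the case $\alpha=1$, where $t\mapsto\log\|u(t)\|$ is genuinely convex and one may take $\kappa=1$, for $0<\alpha<1$ the Mittag-Leffler modes are no longer log-linear in $t$, so establishing the exponential-type interpolation bound uniformly in the eigenvalue — which hinges on the algebraic decay $E_\alpha(-y)\asymp(1+y)^{-1}$ — is the essential point and forces $\kappa>1$.
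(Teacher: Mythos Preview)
Your proof is correct. Note, however, that the paper does not actually prove Theorem~\ref{thm2}: it is explicitly \emph{recalled} from the reference \cite{CMY'22}, so there is no in-paper argument to compare against. Your approach---spectral diagonalization, the scalar interpolation bound $E_\alpha(\lambda t^\alpha)\le\kappa\,E_\alpha(\lambda T^\alpha)^{t/T}$ derived from the two-sided estimate $E_\alpha(-y)\asymp(1+y)^{-1}$, and recombination via H\"older---is the natural route and is almost certainly the one taken in \cite{CMY'22}. The only cosmetic point is that for the regime $0<\lambda\le\lambda_0$ you appeal to continuity on a compact set; this is fine, but one could equally absorb it into the explicit bound by noting that $E_\alpha(\lambda t^\alpha)\le E_\alpha(\lambda_0 T^\alpha)$ and $E_\alpha(\lambda T^\alpha)\ge E_\alpha(0)=1$ on that range, giving an explicit constant throughout.
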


\section{Proof of the logarithmic convexity estimate \label{sec3}}
In this section, we prove the main result of Theorem \ref{thm1}. The key idea is to transform the equation \eqref{eqe1} into a symmetric equation of the same kind, thanks to Assumption \textbf{(H)}.

\begin{proof}[Proof of Theorem \ref{thm1}]
Let us make the change of variable $$v(t,x)=\mathrm{e}^{\frac{b}{2}} u(t,x),$$ where $u(t,x)$ is a solution to \eqref{eqe1}. Then, by a simple calculation, the following identities hold
\begin{align*}
    \partial_t^\alpha u(t,x)&= \mathrm{e}^{-\frac{b}{2}} \partial_t^\alpha v(t,x),\\
    \nabla u &= \mathrm{e}^{-\frac{b}{2}} \left(-\frac{1}{2}v \nabla b + \nabla v\right),\\
    \dv(\mathcal{A}\nabla u)&=\mathrm{e}^{-\frac{b}{2}} \left(\dv(\mathcal{A}\nabla v)-\mathcal{A}\nabla b\cdot \nabla v+\left(\frac{1}{4} \mathcal{A}\nabla b\cdot \nabla b-\frac{1}{2} \dv(\mathcal{A}\nabla b)\right)v\right),\\
    \mathcal{B}\cdot \nabla u + pu &=\mathrm{e}^{-\frac{b}{2}} \left(\left(-\frac{1}{2} \mathcal{A}\nabla b\cdot \nabla b +p\right) v + \mathcal{A}\nabla b\cdot \nabla v \right).
\end{align*}
Therefore, we obtain that $v$ is a solution to the backward problem
\begin{empheq}[left = \empheqlbrace]{alignat=2}
\begin{aligned}\label{eqv1}
&\partial_{t}^\alpha v(t,x) = L_0 v(t,x), && \qquad\text { in }  (0, T) \times \Omega, \\
& v\rvert_{\partial \Omega} =0, &&\qquad\text { on } (0, T) \times \partial \Omega, \\
& v(T,x)=v_T(x) && \qquad \text{ in } \Omega,
\end{aligned} 
\end{empheq}
where $v_T =\mathrm{e}^{\frac{b}{2}} u_T$, $v_0 =\mathrm{e}^{\frac{b}{2}} u_0$, and the operator $L_0$ is given by
\begin{equation}\label{L0}
    L_0 v\, (x) =\mathrm{div}(\mathcal{A}(x)\nabla v(x)) + q(x) v(x),
\end{equation}
with
$$q(x)=p(x)-\frac{1}{2} \dv(\mathcal{A}(x)\nabla b(x)) -\frac{1}{4} \mathcal{A}(x)\nabla b(x)\cdot \nabla b(x), \qquad x\in \Omega.$$
Since $\mathcal{A}$ is of class $C^1$ in $\overline{\Omega}$ and $b\in W^{2,\infty}(\Omega)$ (by Assumption \textbf{(H)}), then $q\in L^\infty(\Omega)$. With the domain
$$D(L_0)= H^2(\Omega) \cap H^1_0(\Omega),$$
it is well-known that $L_0$ is a self-adjoint operator that is bounded above with compact resolvent. Then by applying Theorem \ref{thm2} to the equation \eqref{eqv1}, there exists $\kappa\ge 1$ such that
\begin{equation}\label{lcv}
    \|v(t,\cdot)\|_{L^2(\Omega)} \le \kappa \|v_0\|_{L^2(\Omega)}^{1-\frac{t}{T}} \|v_T\|_{L^2(\Omega)}^{\frac{t}{T}}, \qquad 
0\le t \le T.
\end{equation}
On the other hand, by $b\in L^\infty(\Omega)$, we obtain
$$\|u(t,\cdot)\|_{L^2(\Omega)} \le \mathrm{e}^{\frac{1}{2}\|b\|_\infty} \|v(t,\cdot)\|_{L^2(\Omega)}, \qquad 
0\le t \le T.$$
In particular,
$$\|v_0\|_{L^2(\Omega)} \le \mathrm{e}^{\frac{1}{2}\|b\|_\infty} \|u_0\|_{L^2(\Omega)}, \qquad \|v_T\|_{L^2(\Omega)} \le \mathrm{e}^{\frac{1}{2}\|b\|_\infty} \|u_T\|_{L^2(\Omega)}.$$
Plugging the above inequalities in \eqref{lcv}, we get
$$
\|u(t,\cdot)\|_{L^2(\Omega)} \le \kappa \mathrm{e}^{\|b\|_\infty}\|u_0\|_{L^2(\Omega)}^{1-\frac{t}{T}} \|u_T\|_{L^2(\Omega)}^{\frac{t}{T}}, \qquad 
0\le t \le T.
$$
Thus, the proof is completed.
\end{proof}

\section{Numerical experiments \label{sec4}}
In this section, we use the \texttt{Wolfram} language to validate the theoretical results discussed above. In particular, we use the function \texttt{DSolve} for solving a simple PDE and the recent function \texttt{CaputoD} for the Caputo fractional derivative. Note that the predefined function \texttt{NDSolve} in the last update of \texttt{Wolfram} language cannot solve fractional PDEs numerically.

To approximate the Caputo fractional derivative and solve the corresponding PDEs, we use the classical $L1$ scheme; see \cite[Section 8.2]{OS74} and \cite{Ga14}. Let us briefly recall this method. For the temporal interval $[0,T]$, we denote by $\Delta t=\frac{T}{N}$ the temporal step size of the grid $t_k=k \Delta t, \; k= 0,1\ldots, N$ ($N\in \mathbb N$). By definition, we have
\begin{align*}
    \left.\partial_t^\alpha u(t)\right|_{t=t_k}&=\frac{1}{\Gamma(1-\alpha)} \int_0^{t_k} \left(t_k-s\right)^{-\alpha} u^{\prime}(s)\, \mathrm{d} s\\
    &=\frac{1}{\Gamma(1-\alpha)} \sum_{j=1}^k \int_{t_{j-1}}^{t_j} \left(t_k-s\right)^{-\alpha} u^{\prime}(s)\, \mathrm{d} s .
\end{align*}
Then by using a piecewise linear interpolation of $u(t)$ in each subinterval $[t_{j-1},t_j]$ for $1\le j \le k$, we obtain the $L1$ approximating formula for $0< \alpha <1$,
\begin{equation}\label{L1}
    \left.D_t^\alpha u(t)\right|_{t=t_k} =\frac{(\Delta t)^{-\alpha}}{\Gamma(2-\alpha)}\left(u\left(t_k\right)-\sum_{j=1}^{k-1}\left(a_{k-j-1}^{(\alpha)}-a_{k-j}^{(\alpha)}\right) u\left(t_j\right)-a_{k-1}^{(\alpha)} u\left(t_0\right)\right),
\end{equation}
where $a_{j}^{(\alpha)}:=(j+1)^{1-\alpha}-j^{1-\alpha}, \; 0\le j\le k-1.$ Note that the $L1$ formula \eqref{L1} has an accuracy of order $2-\alpha$, and other higher-order schemes can also be used; see, e.g., \cite{MM20}.

Now using the formula \eqref{L1} to approximate the time-fractional derivative and a finite difference scheme for the spatial derivative, we can discretize the fractional PDE
\begin{equation}\label{eq0}
    \begin{cases}
    \partial^{\alpha}_t v(t,x)= v_{xx}(t,x)+q(x)v(t,x), & (t,x)\in (0,T) 
    \times (0,1),\\
    v(t,0)=v(t,1)=0, & t\in (0,T),\\
    v(0, x)=v_0(x), & x\in (0,1).
    \end{cases}
\end{equation}
Indeed, for the spatial interval $[0, 1]$, we consider the grid $x_i = i \Delta x, \; i = 0,1,\ldots, M$ ($M\in \mathbb N$), where $\Delta x=\frac{1}{M}$ denotes the spatial mesh size. Then we set $v_i:=v(x_i)$ and use the centered difference approximation for $v_{xx}$:
$$\delta_x^2 v_i = \frac{v_{i-1}-2 v_i + v_{i+1}}{(\Delta x)^2}, \quad i=1,\ldots, M-1.$$ Then the equation \eqref{eq0} is approximated by
\begin{equation}\label{eq00}
    \begin{cases}
    D^{\alpha}_t v_i^k = \delta_x^2 v_i^k + q(x_i)v_i^k, & 1\le i \le M-1, \; 1\le k\le N,\\
    v_0^k=v_M^k=0, & 1\le k\le N,\\
    v_i^0=v_0(x_i), & 0\le i \le M.
    \end{cases}
\end{equation}
Next, we fix $N=20$ and $M=80$ in the following examples.

\subsection*{Example 1: a symmetric case} \label{Ex1}
To illustrate the difficulty in non-symmetric equations, we start with the following symmetric equation
\begin{equation}\label{ex1}
    \begin{cases}
    \partial^{\alpha}_t u(t,x)= u_{xx}(t,x), & (t,x)\in (0,0.02) 
    \times (0,1),\\
    u(t,0)=u(t,1)=0, & t\in (0,0.02),\\
    u(0, x)=\sin(\pi x), & x\in (0,1).
    \end{cases}
\end{equation}
This equation can be resolved by the Laplace transform technique as follows. Denote $\displaystyle\mathcal{L}\{f(t)\}(s):=\int_0^\infty \mathrm{e}^{-st}f(t) \d t$ the Laplace transform (with respect to $t$) of $f(t)$. Applying the Laplace transform to \eqref{ex1} and solving the corresponding ordinary differential equation in $x$, we obtain $\mathcal{L}\{u(t,x)\}(s)=\dfrac{s^{\alpha-1}}{s^\alpha+\pi^2} \sin(\pi x)$. Therefore, by the inverse Laplace transform, the solution of equation \eqref{ex1} is given by the following formula
$$u_\alpha(t,x)=E_\alpha\left(-\pi^2 t^\alpha\right) \sin(\pi x), \quad t\in (0,0.02), \; x\in (0,1).$$
Next, we plot the solution $u_\alpha(t,x)$ for $\alpha = 0.1$.
\begin{figure}[H]
\centering
\includegraphics[scale=0.5]{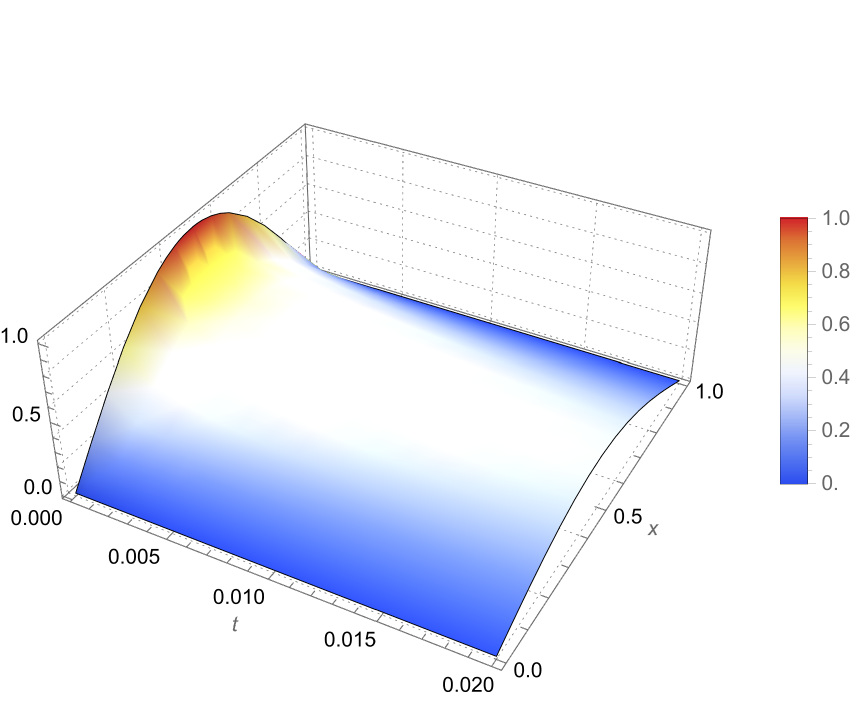}
\caption{The solution $u_\alpha(t,x)$ for $\alpha = 0.1$ in Example 1.}
\label{fig00}
\end{figure}
Now we plot $\log\|u_\alpha(t,\cdot)\|_{L^2(0,1)}$ for $\alpha = 0.1, \;0.3,\; 0.5$.
\begin{figure}[H]
\centering
\includegraphics[scale=0.5]{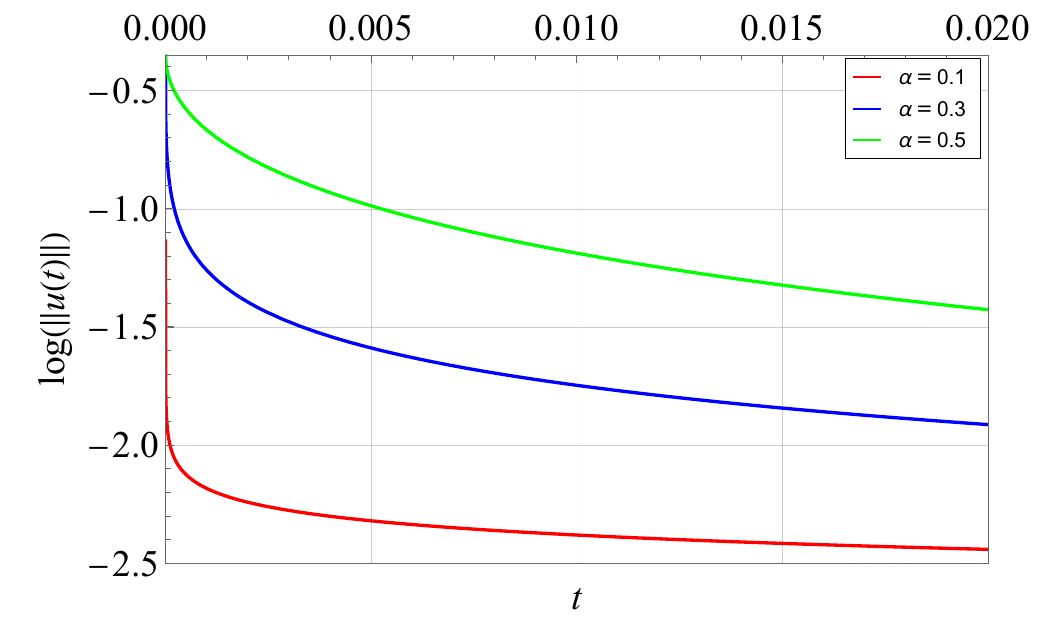}
\caption{$\log\|u_\alpha(t,\cdot)\|_{L^2(0,1)}$ for $\alpha = 0.1, \;0.3,\; 0.5$ in Example 1.}
\label{fig0}
\end{figure}
Figure \ref{fig0} shows that, for $\alpha = 0.1, \;0.3,\; 0.5$, the function $t\mapsto\|u_\alpha(t,\cdot)\|_{L^2(0,1)}$ is log-convex. This is compatible with the theoretical results established in the symmetric case; see \cite{CMY'22}. However, the results therein do not cover non-symmetric equations.

\subsection*{Example 2: a non-symmetric case with Assumption \textbf{(H)}}\label{Ex2}
In this example, we perturb the equation \eqref{ex1} of the previous example and consider instead the following non-symmetric equation
\begin{equation}\label{ex2}
    \begin{cases}
    \partial^{\alpha}_t u(t,x)= u_{xx}(t,x)+u_{x}(t,x), & (t,x)\in (0,0.02) \times (0,1),\\
    u(t,0)=u(t,1)=0, & t\in (0,0.02),\\
    u(0, x)=\sin(\pi x), & x\in (0,1).
    \end{cases}
\end{equation}
It should be emphasized that it is difficult to obtain an explicit formula for the solution to \eqref{ex2} similarly to Example 1. Indeed, by using the symmetrization process presented in Section \ref{sec3}, the function $v(t,x)=\mathrm{e}^{\frac{x}{2}} u(t,x)$ satisfies
\begin{equation}\label{ex2b}
    \begin{cases}
    \partial^{\alpha}_t v(t,x)= v_{xx}(t,x)-\frac{1}{4} v(t,x), & (t,x)\in (0,0.02) \times (0,1),\\
    v(t,0)=v(t,1)=0, & t\in (0,0.02),\\
    v(0, x)=\mathrm{e}^{\frac{x}{2}} \sin(\pi x), & x\in (0,1).
    \end{cases}
\end{equation}
However, an analytical formula for the solution to \eqref{ex2b} is still difficult due to the additional term $\frac{1}{4} v(t,x)$ and the exponential that appears in the initial datum $v(0,x)$. Furthermore, even the Laplace transform technique fails in this context. Indeed, if we apply the Laplace transform to \eqref{ex2b} and solve the corresponding ordinary differential equation in $x$, we obtain a very complicated formula that is difficult to invert. Thus, we only solve this equation numerically by the finite difference method presented earlier.

Next, we plot the solution $u_\alpha(t,x)=\mathrm{e}^{-\frac{x}{2}} v(t,x)$ to equation \eqref{ex2} for $\alpha = 0.1$.
\begin{figure}[H]
\centering
\includegraphics[scale=0.5]{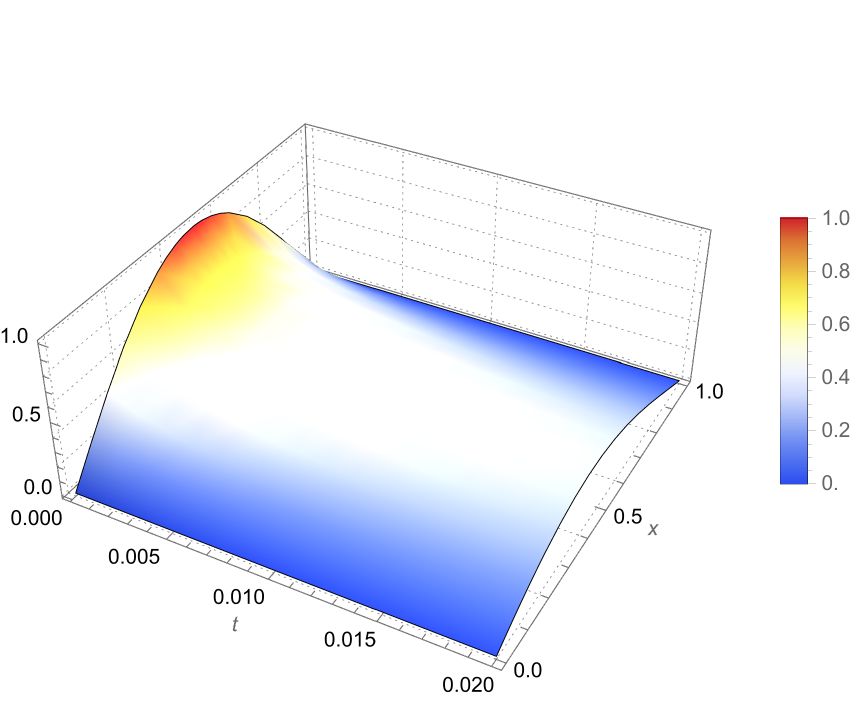}
\caption{The solution $u_\alpha(t,x)$ for $\alpha = 0.1$ in Example 2.}
\label{fig11}
\end{figure}
Now we plot $\log\|u_\alpha(t,\cdot)\|_{L^2(0,1)}$ for $\alpha = 0.1, \;0.3,\; 0.5$.
\begin{figure}[H]
\centering
\includegraphics[scale=0.5]{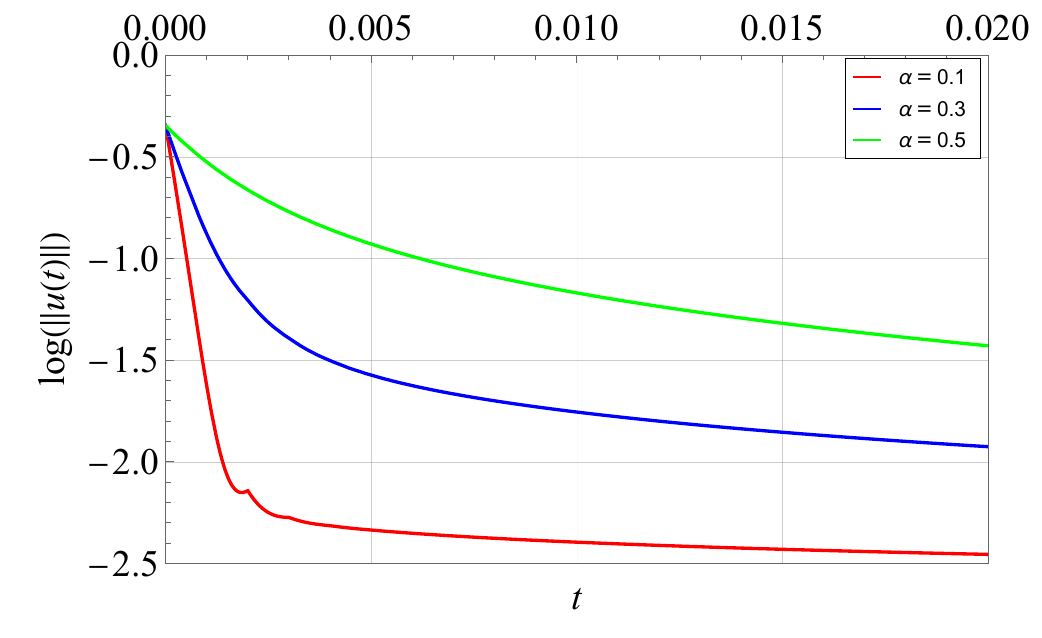}
\caption{$\log\|u_\alpha(t,\cdot)\|_{L^2(0,1)}$ for $\alpha = 0.1, \;0.3,\; 0.5$ in Example 2.}
\label{fig1}
\end{figure}
Figure \ref{fig1} shows that, for $\alpha = 0.1, \;0.3,\; 0.5$, the function $t\mapsto\|u_\alpha(t,\cdot)\|_{L^2(0,1)}$ is also log-convex as in the symmetric case.

In our main result, we have proven that Assumption \textbf{(H)} is sufficient for the logarithmic convexity estimate to hold. The following numerical example shows that this assumption is not necessary for logarithmic convexity.

\subsection*{Example 3: a non-symmetric case without Assumption \textbf{(H)}}\label{Ex3}
Now we consider a non-symmetric equation with a non-smooth drift coefficient
\begin{equation}\label{ex3}
    \begin{cases}
    \partial^{\alpha}_t u(t,x)= u_{xx}(t,x) + \theta\left(x-\frac{1}{2}\right)u_{x}(t,x), & (t,x)\in (0,0.02) \times (0,1),\\
    u(t,0)=u(t,1)=0, & t\in (0,0.02),\\
    u(0, x)=\sin(\pi x), & x\in (0,1),
    \end{cases}
\end{equation}
where $\theta(x):=\begin{cases}1,&x\geq 0,\\0,&x<0,\end{cases}$ denotes the Heaviside function. Note that the drift coefficients are given by $\mathcal{B}(x):=\theta\left(x-\frac{1}{2}\right)\notin W^{1,\infty}(0,1)$ and $b(x):=\left(x-\frac{1}{2}\right)\theta\left(x-\frac{1}{2}\right)+c \notin W^{2,\infty}(0,1),$ $c$ is constant. Therefore, this example violates Assumption \textbf{(H)}, and one cannot use the symmetrization process as in Example 2.

We solve this equation numerically by the finite differences as before. Then, we plot the solution $u_\alpha(t,x)$ to equation \eqref{ex3} for $\alpha = 0.1$.
\begin{figure}[H]
\centering
\includegraphics[scale=0.5]{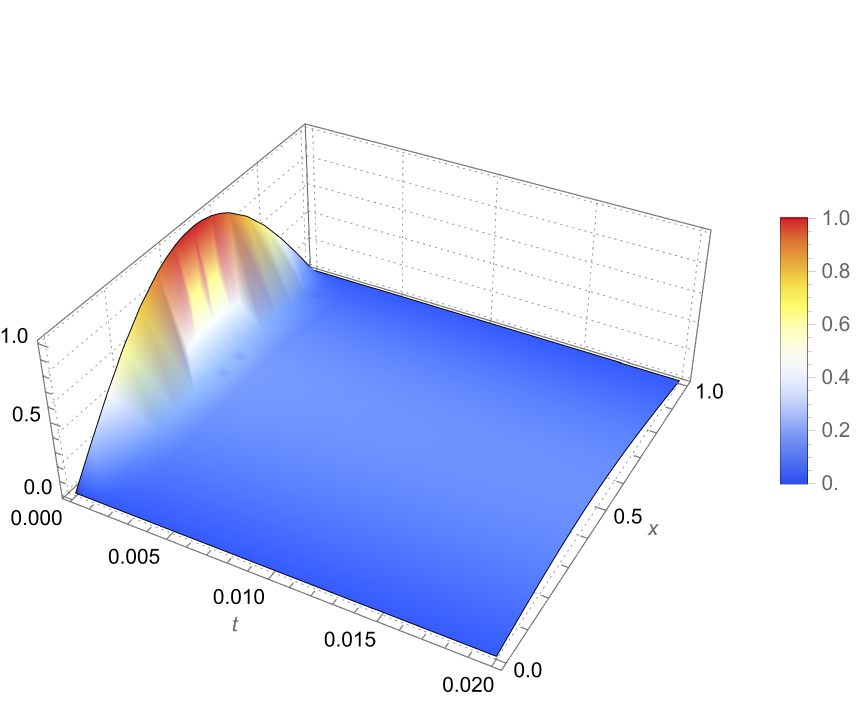}
\caption{The solution $u_\alpha(t,x)$ for $\alpha = 0.1$ in Example 3.}
\label{fig31}
\end{figure}
Now we plot $\log\|u_\alpha(t,\cdot)\|_{L^2(0,1)}$ for $\alpha = 0.1, \;0.3,\; 0.5$.
\begin{figure}[H]
\centering
\includegraphics[scale=0.5]{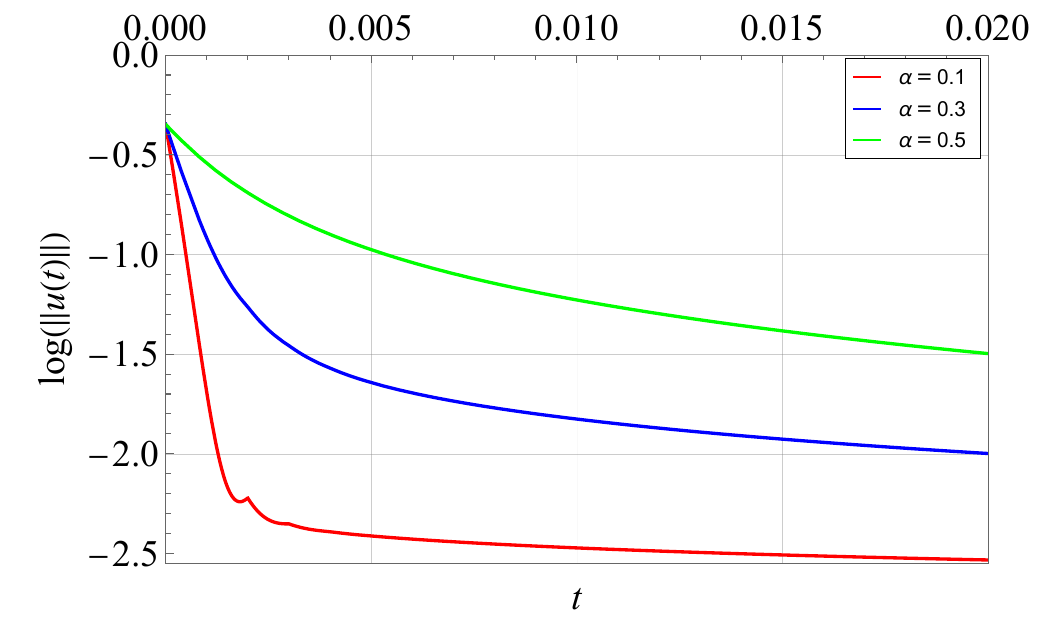}
\caption{$\log\|u_\alpha(t,\cdot)\|_{L^2(0,1)}$ for $\alpha = 0.1, \;0.3,\; 0.5$ in Example 3.}
\label{fig3}
\end{figure}
Figure \ref{fig3} shows that, for $\alpha = 0.1, \;0.3,\; 0.5$, the function $t\mapsto\|u_\alpha(t,\cdot)\|_{L^2(0,1)}$ is also log-convex. Note that Theorem \ref{thm1} does not apply to this case. However, several numerical tests with different initial data show that logarithmic convexity still holds in this case, even if Assumption \textbf{(H)} is not fulfilled.

\section{Final comments and open problems}\label{sec5}
This section will address some concluding comments and open problems that deserve further investigation.

\subsection{Logarithmic convexity without Assumption \textbf{(H)}}
We have shown that Assumption \textbf{(H)} is sufficient for the logarithmic convexity estimate to hold for the solutions of non-symmetric equations. We have also seen in Example 3 that this assumption is not necessary. It is then natural to ask how this assumption can be dropped (or weakened) in the proof. Indeed, in the integer case $\alpha=1$, the logarithmic convexity estimate holds for a general non-symmetric operator $L$ defined by \eqref{opdef} without assuming \textbf{(H)}, see \cite[Example 3.1.6]{Is'17} and \cite[Lemma A.4]{ACM'22} for a general abstract result. We expect the same general result for the fractional case $0<\alpha<1$.

\subsection{Backward uniqueness for analytic semigroups}
First, recall that the backward uniqueness is a weaker property implied by logarithmic convexity. In the integer case $\alpha=1$, it is known that the solution of the Cauchy problem corresponding to an analytic semigroup satisfies the backward uniqueness property. This fact results from analyticity and the semigroup law \cite{KP'60, Mi75}. However, in the fractional case $0<\alpha<1$, although the analyticity of the solutions is preserved as shown in \cite{Baj'01, LHY20}, there is no standard analog of the semigroup law. In the fractional case with a non-symmetric operator $L$, we know that the backward uniqueness holds given the results in \cite{FLY'20}. We expect an affirmative answer to this question in a general framework of Banach spaces, but the proof is still an open problem to be resolved.

\end{document}